\newtheorem{theorem}{Theorem}[section]
\newtheorem{thm}[theorem]{Theorem}
\newtheorem{pro}{Proposition}[section]
\newtheorem{lemma}[pro]{Lemma}
\newtheorem{remark}[pro]{Remark}
\numberwithin{equation}{section}
\def\b{\bigskip}
\def\Q{\mathbb Q}
\def\mathscr{\mathcal }
\def\b0{\mathbf 0}
\def\br{\mathbf r}
\newcommand{\bx}{{\mathbf{x}}}
\newcommand{\by}{{\mathbf{y}}}
\newcommand{\SD}{{\mathcal{D}}}
\newif\ifdraft
\begin{document}

\title{Strict H\"older equivalence of self-similar sets}

\author{Yanfang Zhang}
\address{School of Science, Huzhou University, Huzhou 313000, Zhejiang, China.}
\email{03002@zjhu.edu.cn}

\author{Xinhui Liu$\dag$}
\address{School of Science, Huzhou University, Huzhou 313000, Zhejiang, China.}
\email{xinhuiliu1005@163.com}

\date{\today}
\thanks{This work is supported by Huzhou Natural Science Foundation No. 2022YZ37.}
\thanks{$\dagger$ Corresponding author.}
\thanks{{\bf 2020 Mathematics Subject Classification:}  26A16, 28A12, 28A80.\\
	{\indent\bf Key words and phrases:}\ self-similar set, fractal cube,  H\"older equivalence}
\maketitle{ }

\begin{abstract}
	The study of Lipschitz equivalence of fractals is a very active topic in recent years. It is natural to ask when two fractal sets are strictly H\"older equivalent. In the present paper, we completely characterize the strict H\"older equivalence for two classes of self-similar sets: the first class is totally-disconnected fractal cubes and the second class is self-similar sets with two branches which satisfy the strong separation condition.
\end{abstract}

\section{Introduction}

Since the poineer works of Falconer and Marsh \cite{FM92} and David and Semes \cite{DS}  around 1990, the study of bi-Lipschitz classification of self-similar sets has become a very active topic and abundant results have been obtained, see\cite{RaoRW12, FanRZ15,LuoL13,RRX06, XiXi10, RuanWX14,RaoZ15, LuoL16, RuanW17, YZ18, MZ20, XiXi20}.

 Two metric spaces  $(X,d_X)$ and $(Y,d_Y)$ are said to be  \emph{ strictly H\"older equivalent}, denoted by $$X\overset{\text{H\"older}}{\sim}Y,$$
		if there is a bijection $f:~X\to Y$, and  constants $s, C>0$ such that
\begin{equation}\label{eq:Holder}
 C^{-1}d_X(x_1,x_2)^{s} \leq d_Y \big( f(x_1),f(x_2)\big ) \leq C d_X(x_1,x_2)^s,~~~~\forall~~x_1,x_2\in X.
 \end{equation}
		In this case, we say $f$ is a \emph{bi-H$\ddot{o}$lder map} with index $s$.
		If $s=1$, we say $X$ and $Y$ are  \emph{Lipschitz equivalent}, denoted by $X\sim Y$, and call $f$ a \emph{bi-Lipschitz map}.

 \begin{remark} \rm We remark that two metric spaces  $(X,d_X)$ and $(Y,d_Y)$ are said to be  \emph{H\"older equivalent},
		if there is a bijection $f:~X\to Y$, and  constants $0<s<1, C>0$ such that
\begin{equation}\label{eq:Holder}
 C^{-1}d_X(x_1,x_2)^{1/s} \leq d_Y \big( f(x_1),f(x_2)\big ) \leq C d_X(x_1,x_2)^s,~~~~\forall~~x_1,x_2\in X.
 \end{equation}
		 Clearly strict H\"older equivalence implies H\"older equivalence. For H\"older equivalence of fractal sets, see
\cite{FM92,  HWYZ23 ,ZX22} \it{etc}.
\rm  For example, Falconer and Marsh \cite{FM92} proved that every self-similar set satisfying
the strong separation condition is H\"older equivalent to the ternary Cantor set.
\end{remark}

A family of contractions $\{\varphi_j\}_{j=1}^{N}$   on $\mathbb{R}^{d}$ is called
an \emph{iterated function system} (IFS).
The unique nonempty compact set $K$ satisfying
$K=\bigcup_{j=1}^N\varphi_j(K)$ is called the attractor of the IFS.
If $\varphi_i(K)\cap \varphi_j(K)=\emptyset$ provided $i\neq j$, then we say the IFS (or $K$)
satisfies the \emph{strong separation condition}.

If the contractions $\{\varphi_j\}_{j=1}^{N}$ are all similitude, then the attractor is called  a \emph{self-similar set} defined by the IFS (cf. \cite{Hutchinson1981}).
In particular, if
$$\left \{\varphi_j\left({x}\right)= \frac{x+d_j}{n}\right\}_{j=1}^N$$
where $n \geq 2, d\geq 1$ are integers and $ \SD=\{d_1,\dots, d_N\}\subset \{0,\dots, n-1\}^d$, then
we denote the  attractor by $K=K(n,\SD)$ and call it a \emph{fractal cube}, see \cite{LLR13, XiXi10}. Figure \ref{fig:cross} is a fractal cube with $d=2,n=5,N=20$.

\begin{figure}[h]
  \centering
  \includegraphics[width=0.36\textwidth]{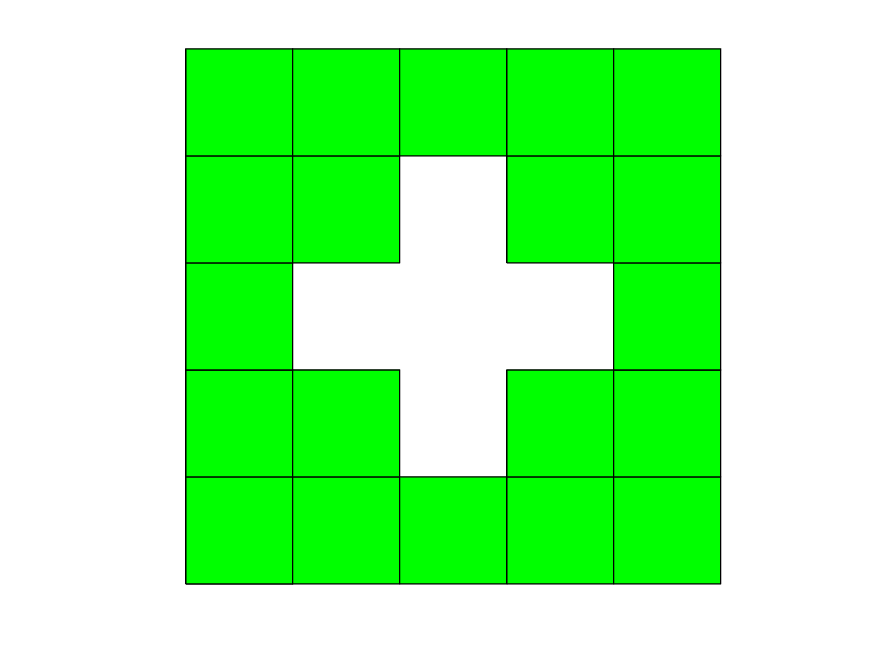}
  \includegraphics[width=0.36\textwidth]{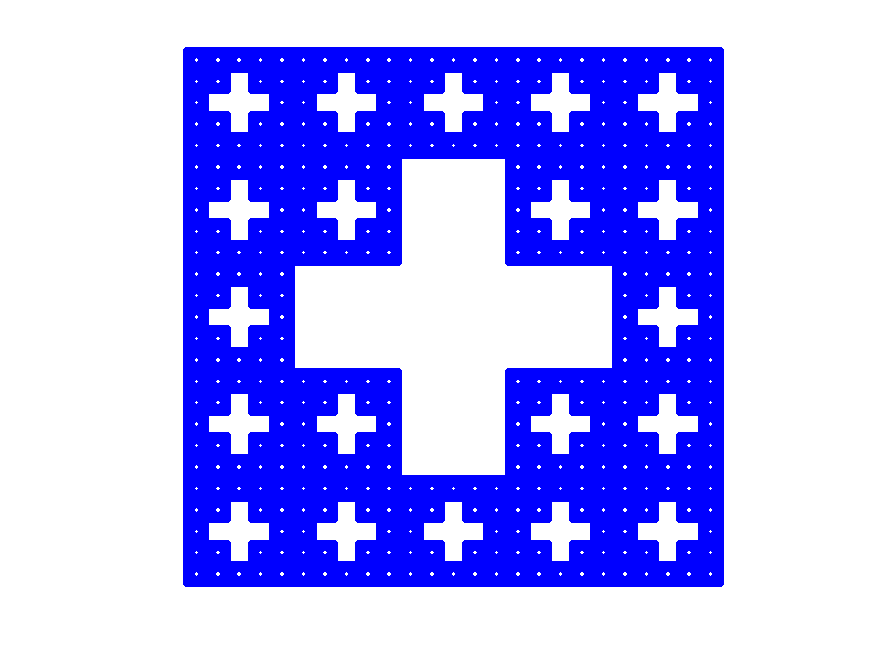}\\
  \caption{A fractal cube: the picture on the left indicates the digit set and the picture on the right is the fractal cube.}
  \label{fig:cross}
\end{figure}

Xi and Xiong \cite{XiXi10} characterized when two totally disconnected fractal cubes are Lipschitz equivalent.

\begin{pro} [\cite{XiXi10}] Let $E=K(n, \SD)$ and $F=K(n', \SD')$ be two totally disconnected fractal cubes.
Then $E$ and $F$ are Lipschitz equivalent if and only if
\begin{equation}
\frac{\log n}{\log n'}=\frac{\log N}{\log N'}\in \Q.
\end{equation}
\end{pro}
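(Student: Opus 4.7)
The plan is to split the equivalence into sufficiency and necessity and treat them independently.

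For sufficiency, I would assume $\log n/\log n'=\log N/\log N'=p/q$ with coprime positive integers $p,q$, so that $n^{q}=(n')^{p}$ and $N^{q}=(N')^{p}$. Iterating the defining IFS of $E$ exactly $q$ times, and the defining IFS of $F$ exactly $p$ times, realises both sets as attractors of IFSs with the same number $M:=N^{q}=(N')^{p}$ of similitudes, all having the common ratio $r:=n^{-q}=(n')^{-p}$. Total disconnectedness forces both iterated IFSs to satisfy the strong separation condition with some uniform gap $\delta>0$. I would then pick any bijection between the two digit sets of size $M$ and extend it to a bijection $f:E\to F$ through the symbolic coding. For distinct $x,y\in E$, letting $k$ be the largest integer for which $x$ and $y$ share a level-$k$ cylinder of the iterated system, one has $\delta r^{k}\leq d(x,y)\leq \sqrt{d}\, r^{k}$; since $f$ carries level-$k$ cylinders onto level-$k$ cylinders, the same sandwich bounds $d(f(x),f(y))$, and uniform bi-Lipschitz constants drop out.

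For necessity, the dimension equality comes essentially for free: a bi-Lipschitz map preserves Hausdorff dimension, and under the strong separation condition $\dim_{H} K(n,\SD)=\log N/\log n$, which gives $\log n/\log n'=\log N/\log N'$. The substantive step is to prove that this common ratio is rational. My plan here is to transport the hierarchical cylinder structure across the bi-Lipschitz map $f:E\to F$: for each level-$k$ cylinder $\varphi_{\bi}(E)$, the image $f(\varphi_{\bi}(E))$ has diameter $\asymp n^{-k}$ and, by the uniform separation in $F$, can be matched up to bounded distortion with a level-$m$ cylinder of $F$ satisfying $(n')^{-m}\asymp n^{-k}$. Since the set of local configurations of such matchings is finite (parametrised by the digit sets $\SD,\SD'$ and the distortion constants), a pigeonhole argument returns identical configurations at infinitely many scales, and iterating this self-similarity of matchings forces an exact arithmetic identity $n^{a}=(n')^{b}$ with $a,b\in\N$, equivalently $\log n/\log n'\in\Q$.

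The main obstacle is unquestionably this rationality step. The sufficiency argument is a standard coding construction once the two iterated IFSs are aligned to the same ratio and cardinality, and the dimension identity is a one-line invariance. By contrast, passing from the approximate equality $n^{-k}\asymp(n')^{-m}$ to an exact integer relation demands careful use of total disconnectedness, in order to convert near-coincidences of cylinders into genuine coincidences, together with a compactness/finiteness argument on the types of intersection patterns. I would expect the proof to follow the strategy of Xi--Xiong: introduce a graph encoding the bi-Lipschitz correspondence between cylinders, extract an infinite path of fixed type, and read the desired rational relation off a return loop of that path.
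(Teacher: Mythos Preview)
The paper does not prove this proposition at all: it is quoted from Xi--Xiong \cite{XiXi10} and used as a black box. The route implicit in the paper is to combine two other cited results --- Lemma~\ref{lem:XiXi} (every totally disconnected fractal cube is bi-Lipschitz to the symbolic space $(\Omega_N,\rho_{1/n})$) and Theorem~\ref{thm:RRW} (two symbolic spaces $(\Omega_N,\rho_r)$ and $(\Omega_{N'},\rho_{r'})$ are Lipschitz equivalent iff $\log r/\log r'=\log N/\log N'\in\Q$) --- from which the proposition follows in one line.

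Your direct approach is reasonable in outline, but the sufficiency argument contains a genuine gap. You assert that ``total disconnectedness forces both iterated IFSs to satisfy the strong separation condition with some uniform gap $\delta>0$.'' This is false for fractal cubes. For example, with $d=1$, $n=4$, $\SD=\{0,1,3\}$, the attractor has Hausdorff dimension $\log 3/\log 4<1$, hence is totally disconnected, yet $\varphi_0(K)\cap\varphi_1(K)=\{1/4\}$ because $0,1\in K$; no amount of iteration removes such touching. Consequently the natural coding map $\Omega_N\to K$ is not injective, your uniform lower gap $\delta$ does not exist, and the sandwich bound $\delta r^{k}\le d(x,y)$ fails. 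Exactly this obstacle is what makes Xi--Xiong's Lemma~\ref{lem:XiXi} nontrivial: they must build a bi-Lipschitz map to the symbolic space by a different mechanism than the canonical coding. Once that lemma is in hand, your iterate-and-match construction becomes unnecessary, since the symbolic-space criterion of Theorem~\ref{thm:RRW} finishes both directions at once.

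Your necessity sketch is closer to the mark: the dimension identity is immediate, and the pigeonhole/graph strategy you describe for rationality is indeed in the spirit of Falconer--Marsh and Xi--Xiong. But note that the phrase ``by the uniform separation in $F$'' again presupposes SSC, so the same caveat applies; in the literature this step is carried out at the symbolic level after first passing through Lemma~\ref{lem:XiXi}.
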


 The first result of this paper is following.

\begin{theorem}\label{thm:cube}
	 Let $E=K(n, \SD)$ and $F=K(n', \SD')$ be totally disconnected fractal cubes where
$\SD$ is a digit set with cardinality $N$ and the cardinality of $\SD'$ is $N'$.
Then $E$ and $F$ are strictly H\"older equivalent if and only if
\begin{equation}
 \frac{\log N}{\log N'}\in \Q.
\end{equation}
\end{theorem}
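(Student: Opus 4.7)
My plan is in two parts, corresponding to the two directions of the theorem.

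\textbf{Sufficiency.} Suppose $\log N/\log N' = p/q$ in lowest terms with $p,q \ge 1$, so that $N^q = (N')^p$. Since $E$ and $F$ are totally disconnected fractal cubes, every point of $E$ (resp.\ $F$) has a unique address in $\{1,\dots,N\}^{\mathbb N}$ (resp.\ $\{1,\dots,N'\}^{\mathbb N}$), and the Euclidean metric on $E$ is equivalent, up to multiplicative constants, to the ultrametric $n^{-k(\sigma,\tau)}$, where $k(\sigma,\tau)$ is the first index of disagreement of the addresses (similarly on $F$). I would fix any bijection $\eta : \{1,\dots,N\}^q \to \{1,\dots,N'\}^p$ (which exists by the cardinality identity) and define $f : E \to F$ as follows: decode $x\in E$ to its $E$-address, partition this address into blocks of length $q$, apply $\eta$ to each block to obtain a concatenation of length-$p$ blocks, and re-encode via the address map of $F$. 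A direct ultrametric calculation then yields $d_F(f(x),f(y)) \asymp d_E(x,y)^s$ with $s = p\log n'/(q\log n)$; this value of $s$ matches the one forced by dimension considerations in the necessity direction.

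\textbf{Necessity.} Suppose $f : E \to F$ is bi-H\"older with exponent $s$. Since Hausdorff dimension transforms as $\dim_H E = s\cdot \dim_H F$ under a bi-H\"older bijection, the identity $\log N/\log n = s\cdot \log N'/\log n'$ holds, and so
\[
 s = \frac{\log N\cdot\log n'}{\log n\cdot\log N'} \qquad \text{and} \qquad s\cdot\frac{\log n}{\log n'} = \frac{\log N}{\log N'} =: \alpha .
\]
I would then exploit the natural self-similar Hausdorff measures $\mu_E, \mu_F$, both of which are Ahlfors regular of the respective Hausdorff dimensions. A standard argument shows that $f_*\mu_E$ is Ahlfors regular of dimension $\dim_H F$ on $F$ and is therefore comparable to $\mu_F$ up to uniform multiplicative constants. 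For each level-$k$ cylinder $C^E_{\mathbf i}$ of $E$, the image $f(C^E_{\mathbf i})$ is clopen in $F$ (as the homeomorphic image of a clopen set) and is therefore a finite disjoint union of $F$-cylinders; a diameter comparison forces the minimal level $L(k,\mathbf i)$ at which this decomposition is possible to satisfy $L(k,\mathbf i) = k\alpha + O(1)$, and the number of level-$L(k,\mathbf i)$ cylinders of $F$ that cover $f(C^E_{\mathbf i})$ is a positive integer comparable to $(N')^{L(k,\mathbf i)}/N^k$. By iterating and comparing how the level-$k$ partition of $E$ refines into the level-$(k+1)$ partition (each $E$-cell splits into $N$ sub-cells) against the analogous $F$-refinement (each $F$-cell splits into $N'$ sub-cells), a bootstrapping argument should force the asymptotic ratio $\alpha$ to be rational.

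\textbf{Main obstacle.} The sufficiency direction is essentially an explicit construction, and I expect the routine estimates to go through smoothly. The real difficulty lies in the necessity direction: translating the analytic information (dimension matching and measure comparability) into the algebraic statement $\log N/\log N' \in \mathbb Q$. A pure dimension argument only determines $s$, and the straightforward measure-theoretic consequence that each $f(C^E_{\mathbf i})$ is a finite union of $F$-cylinders whose total measure is a rational number comparable to $N^{-k}$ is a priori compatible with irrational $\alpha$ (it only enforces, for instance, that every prime factor of $N$ divides some power of $N'$). The key new idea must therefore be a quantitative tracking of how the two nested cylinder structures interact under $f$ at all scales simultaneously, converting the uniform bi-H\"older constants into a Diophantine constraint on $\alpha$ that is only solvable over $\mathbb Q$.
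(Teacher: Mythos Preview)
Your sufficiency direction is correct and essentially coincides with the paper's argument: the paper passes to the symbolic models $(\Omega_N,\rho_{1/n})$ and $(\Omega_{N'},\rho_{1/n'})$ via Xi--Xiong, observes that the identity map on $\Omega_N$ is bi-H\"older between $\rho_r$ and $\rho_{r'}$ for any $r,r'$, and then uses the block bijection $\{1,\dots,N\}^q\to\{1,\dots,N'\}^p$ exactly as you do.

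Your necessity direction, however, has a genuine gap, and you correctly flag it as the main obstacle. The measure-theoretic bootstrapping you sketch is not needed and, as you yourself note, it is not clear how to close it. The missing idea is a one-line reduction to the already-known \emph{Lipschitz} classification: if $f:E\to F$ is bi-H\"older with index $s$, then the very same map $f$ is bi-Lipschitz from the metric space $(E,d_E^s)$ to $(F,d_F)$. On the symbolic side this is completely transparent: $(\Omega_N,\rho_{1/n})$ with its metric raised to the $s$-th power is exactly $(\Omega_N,\rho_{1/n^s})$, so the bi-H\"older hypothesis says precisely that $(\Omega_N,\rho_{1/n^s})$ and $(\Omega_{N'},\rho_{1/n'})$ are bi-Lipschitz equivalent. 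Now the Rao--Ruan--Wang characterization of Lipschitz equivalence of such symbolic spaces (equivalently, Xi--Xiong's result for totally disconnected fractal cubes) immediately yields
\[
\frac{\log N}{\log N'}=\frac{\log(1/n^s)}{\log(1/n')}\in\Q.
\]
No Ahlfors-regularity or cylinder-counting argument is required; the Diophantine content is already packaged inside the cited Lipschitz classification, and the H\"older case reduces to it by the trivial observation that ``bi-H\"older with index $s$'' equals ``bi-Lipschitz after snowflaking the source metric by exponent $s$.''
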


Next, we study strict H\"older equivalence of self-similar sets satisfying the strong separation condition.
Let $r_1,\dots, r_m\in (0,1)$. We use
$${\mathcal S}(r_1,\dots, r_m)$$
 to denote the collection of
self-similar sets with contraction ratios $r_1,\dots, r_m$ and satisfying the strong separation condition.
It is well-known that any two elements in ${\mathcal S}(r_1,\dots, r_m)$ are Lipschitz equivalent, see for instance,
Rao, Ruan and Wang \cite{RaoRW12}.

\begin{theorem}\label{thm:SSC} Let $E\in {\mathcal S}(r_1,\dots, r_m)$ and $F\in {\mathcal S}(t_1,\dots, t_n)$.
Let $s=\dim_H E/\dim_H F$. Let $E'\in {\mathcal S}(r_1^s, \dots, r_m^s)$.
Then $E\overset{\text{H\"older}}{\sim} F$ if and only if $E'\sim F$.
\end{theorem}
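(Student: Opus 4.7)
I will approach Theorem \ref{thm:SSC} by constructing a canonical bi-H\"older ``bridge'' $\phi:E\to E'$ of index $s$, after which both directions follow by composition of maps. Because $E\in\mathcal{S}(r_1,\dots,r_m)$ and $E'\in\mathcal{S}(r_1^s,\dots,r_m^s)$ both satisfy the SSC over a common alphabet $\{1,\dots,m\}$, each admits a canonical coding bijection from the symbolic space $\{1,\dots,m\}^{\mathbb{N}}$; denote these $\pi_E$ and $\pi_{E'}$ respectively, and set $\phi:=\pi_{E'}\circ\pi_E^{-1}:E\to E'$. This bijection intertwines the $i$-th contraction of the first IFS with the $i$-th contraction of the second.

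The substantive step is the bi-H\"older estimate for $\phi$. Given distinct $x,y\in E$ with longest common address $i_1\cdots i_k$, write $r_{\bi}:=r_{i_1}\cdots r_{i_k}$. The SSC for $E$ furnishes a uniform gap $\delta:=\min_{i\ne j}\dist(\psi_i(E),\psi_j(E))>0$, from which one obtains
\[
\delta\, r_{\bi}\ \le\ d_E(x,y)\ \le\ \diam(E)\, r_{\bi}.
\]
Since $\phi(x),\phi(y)$ share the same longest common address in $E'$, the analogous estimate (with ratios $r_{\bi}^s$ and the SSC gap of $E'$) yields $d_{E'}(\phi(x),\phi(y))\asymp r_{\bi}^s\asymp d_E(x,y)^s$, so $\phi$ is bi-H\"older of index $s$.

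With $\phi$ in hand, both directions reduce to composition. For $(\Leftarrow)$, if $g:E'\to F$ is bi-Lipschitz, then $g\circ\phi:E\to F$ is bi-H\"older of index $s$, so $E\overset{\text{H\"older}}{\sim}F$. For $(\Rightarrow)$, suppose $f:E\to F$ is bi-H\"older of some index $t$. A standard covering argument shows that a bi-H\"older map of exponent $t$ multiplies Hausdorff dimension by $1/t$, so $\dim_H F=\dim_H E/t$, which forces $t=\dim_H E/\dim_H F=s$. Then $f\circ\phi^{-1}:E'\to F$ satisfies $d_F\asymp d_E^{\,s}\asymp d_{E'}$, i.e.\ is bi-Lipschitz, whence $E'\sim F$.

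The technical obstacles are concentrated in two places. The first is the deepest-common-cylinder distance estimate for $\phi$, for which the SSC gap $\delta$ is indispensable; without SSC, points with different addresses could be arbitrarily close within a common cylinder and the bi-H\"older control would collapse. The second is the identification of the H\"older exponent as $t=s$ in the forward direction, which rests on the fact that Hausdorff dimension scales by $1/t$ under bi-H\"older maps of index $t$, together with Moran's formula (valid under SSC) governing $\dim_H E$ and $\dim_H F$.
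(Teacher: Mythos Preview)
Your proof is correct and is essentially the paper's argument unpacked. The paper routes the ``bridge'' $E\overset{\text{H\"older}}{\sim}E'$ through two cited lemmas---Falconer--Marsh's $E\sim(\Omega_m,\rho_{\br})$ and the trivial $(\Omega_m,\rho_{\br})\overset{\text{H\"older}}{\sim}(\Omega_m,\rho_{\br^s})$ via the identity---whereas you build $\phi=\pi_{E'}\circ\pi_E^{-1}$ directly and prove the cylinder estimate $d_E\asymp r_{\bi}$, $d_{E'}\asymp r_{\bi}^s$ from the SSC gap; these are the same map described two ways. One point where your write-up is actually sharper than the paper's: in the forward direction the paper simply asserts ``since $\dim_H E'=\dim_H F$, we obtain $E'\sim F$,'' leaving implicit why the composite H\"older index must be $1$; your explicit argument that a bi-H\"older map of index $t$ forces $\dim_H F=\dim_H E/t$, hence $t=s$, fills that gap cleanly. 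The mention of Moran's formula at the end is not strictly needed for the logic you wrote (the identification $t=s$ already does the work), but it does confirm $\dim_H E'=\dim_H F$ as a sanity check.
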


Rao, Ruan and Wang \cite{RaoRW12} have characterized when two self-similar sets
with two branches are Lipschitz equivalent.  Based on their result, we completely characterize when two self-similar sets with two branches are strictly H\"older equivalent.

\begin{theorem}\label{thm:two-branch} Let $E\in {\mathcal S}(r_1, r_2)$ and $F\in {\mathcal S}(t_1,t_2)$.
Without loss of generality, assume that $r_1\geq r_2$ and $t_1\geq t_2$.
Then

(i) If $\log r_1/\log r_2\not\in \Q$, then $E\overset{\text{H\"older}}{\sim} F$
if and only if
$$
\frac{\log r_1}{\log t_1}=\frac{\log r_2}{\log t_2}.
$$

(ii) If $\log r_1/\log r_2\in \Q$, then $E\overset{\text{H\"older}}{\sim} F$
if and only if
$$
\frac{\log r_1}{\log r_2}=\frac{2}{3} \text{and} \quad \frac{\log t_1}{\log t_2}=\frac{1}{5},
$$
or the other round $\frac{\log r_1}{\log r_2}=\frac{1}{5},$ $\frac{\log t_1}{\log t_2}=\frac{2}{3}$.
\end{theorem}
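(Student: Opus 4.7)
The plan is to apply Theorem~\ref{thm:SSC} to reduce the strict H\"older equivalence of $E$ and $F$ to the Lipschitz equivalence of an auxiliary pair, and then to invoke the characterization of Lipschitz equivalence for two-branch self-similar sets due to Rao, Ruan, and Wang~\cite{RaoRW12}. Set $\alpha = \dim_H E$ and $\beta = \dim_H F$, determined by the Moran equations $r_1^\alpha + r_2^\alpha = 1$ and $t_1^\beta + t_2^\beta = 1$, and let $s = \alpha/\beta$. By Theorem~\ref{thm:SSC}, $E \overset{\text{H\"older}}{\sim} F$ holds if and only if $E' \sim F$, where $E' \in \mathcal{S}(r_1^s, r_2^s)$. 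Since $\log r_1^s/\log r_2^s = \log r_1/\log r_2$, the rationality type of the log-ratio is preserved when passing from $E$ to $E'$, and by construction $\dim_H E' = \beta = \dim_H F$.

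In case~(i), the log-ratio of $E'$ is irrational. Rao-Ruan-Wang's irrational-case theorem then yields $E' \sim F$ if and only if the multisets of contraction ratios agree, i.e., $\{r_1^s, r_2^s\} = \{t_1, t_2\}$. The ordering convention forces $r_1^s = t_1$ and $r_2^s = t_2$, which is equivalent to $\log r_1/\log t_1 = \log r_2/\log t_2$.

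In case~(ii), write $r_1 = \lambda^p, r_2 = \lambda^q$ in lowest terms with $p < q$; similarly $t_1 = \mu^{p'}, t_2 = \mu^{q'}$ (if $\log t_1/\log t_2 \notin \Q$, an argument parallel to case~(i) rules out Lipschitz equivalence, since $E'$ inherits rational log-ratio type from $E$). The Moran equations for $E'$ and for $F$ become $x^p + x^q = 1$ and $x^{p'} + x^{q'} = 1$, each with a unique positive root in $(0,1)$. The rational-case Rao-Ruan-Wang criterion then translates, after some work, into requiring that these two equations share their positive root, together with a compatible refinement of the combinatorial IFS encodings. The decisive algebraic identity is
\[
x^5 + x - 1 = (x^3 + x^2 - 1)(x^2 - x + 1),
\]
which shows that the positive root of $x^3 + x^2 = 1$ also satisfies $x^5 + x = 1$. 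A systematic check over coprime exponent pairs, using monotonicity of $x \mapsto x^p + x^q$ and explicit estimates on the Moran roots, rules out every other non-trivial pairing, leaving only $(\{p,q\},\{p',q'\}) = (\{2,3\},\{1,5\})$ up to swap, which is the stated condition.

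The principal obstacle is the rational case: one must first unpack Rao-Ruan-Wang's rational-case criterion into the Diophantine condition on coprime exponent pairs described above, and then verify exhaustively that no coincidence of Moran roots occurs for any other pair of coprime pairs. This requires both combinatorial care in translating the Lipschitz equivalence criterion and a Diophantine/algebraic argument isolating the factorization displayed above; the irrational case, by contrast, follows essentially immediately from Theorem~\ref{thm:SSC} and the ratio-matching half of Rao-Ruan-Wang's theorem.
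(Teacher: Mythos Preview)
Your overall strategy—reduce via Theorem~\ref{thm:SSC} to a Lipschitz question about $E'\in\mathcal S(r_1^s,r_2^s)$ and $F$, then invoke Rao--Ruan--Wang—is exactly what the paper does, and your treatment of case~(i) matches the paper's essentially line for line.

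Where you diverge is case~(ii). You propose to unpack the Rao--Ruan--Wang criterion into a Diophantine problem on coprime exponent pairs, exhibit the factorization $x^5+x-1=(x^3+x^2-1)(x^2-x+1)$, and then rule out all other coincidences of Moran roots by a systematic search. None of this is needed here: the paper simply quotes the Rao--Ruan--Wang two-branch theorem in its \emph{final} explicit form (stated in the paper as Theorem~\ref{thm:Lip}), which in the rational case already says that $E'\sim F$ forces $\{r_1^s,r_2^s\}=\{\lambda^2,\lambda^3\}$ and $\{t_1,t_2\}=\{\lambda,\lambda^5\}$ (or the roles swapped) for some $\lambda\in(0,1)$. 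Reading off $\log r_1/\log r_2=\log r_1^s/\log r_2^s=2/3$ and $\log t_1/\log t_2=1/5$ (or vice versa) is then immediate, and the converse follows by the same bridge $F\sim E'\overset{\text{H\"older}}{\sim}E$ you used in case~(i). In short, the Diophantine analysis you outline is precisely the content of \cite{RaoRW12}; the present paper treats it as a black box rather than re-deriving it.
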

 \begin{remark} \rm
The ratios of Theorem \ref{thm:two-branch} arise from the algebraic constraints in the Lipschitz equivalence.
\end{remark}

	\medskip
	
	This article is organized as follows:
	In Section 2, we  study the strict H\"older equivalence of fractal cubes and prove Theorem \ref{thm:cube} there.
	In Section  3,  we investigate the strict H\"older equivalence of  self-similar sets satisfying the strong separation condition and prove Theorem \ref{thm:SSC} and Theorem \ref{thm:two-branch} there.

\section{\textbf{Strict H\"older equivalence of  fractal cubes} }\label{symbol}

First, we  recall some basic definitions and facts about symbolic spaces.

Let $N\geq 2$ be an integer. Set
$$
\Omega_N=\{0,1,\dots, N-1\}^\infty.
$$
Let $\bx=(x_j)_{j=1}^\infty, \by=(y_j)_{j=1}^\infty\in \Omega_N$. Let $\bx\wedge\by$ be the maximal common prefix of $\bx$ and $\by$. For a word $x_1\dots x_n$, we denote $|x_1\dots x_n|=n$ to be its length.

Let $r\in (0,1)$. We define a metric $\rho_r$ on $\Omega_N$ as
$$
\rho_r(\bx,\by)=r^{|\bx\wedge \by|}.
$$
Then $(\Omega_N, \rho_r)$ is a metric space. It is well known that the Hausdorff dimension of this space is
$$
\dim_H(\Omega_N, \rho_r)=\frac{\log N}{-\log r}.
$$

\begin{thm}[\cite{RaoRW12}]\label{thm:RRW}
 Two symbolic spaces $(\Omega_N, \rho_r)$ and $(\Omega_{N'}, \rho_{r'})$ are Lipschitz equivalent if and only if
 $$
 \frac{\log r}{\log r'}=\frac{\log N}{\log N'}\in \Q.
 $$
\end{thm}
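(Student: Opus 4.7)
The plan is to prove the two implications separately, with the sufficiency by a regrouping argument and the main work lying in the rationality half of the necessity.

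For sufficiency, write $\log r/\log r'=\log N/\log N'=p/q$ with $p,q$ positive coprime integers, so that $r^q=(r')^p$ and $N^q=(N')^p$. I regroup $\Omega_N$ into consecutive blocks of $q$ letters, viewing each block as a super-letter from an alphabet of size $N^q$. The resulting natural identification $(\Omega_N,\rho_r)\to(\Omega_{N^q},\rho_{r^q})$ is bi-Lipschitz: a common prefix of length $kq+s$ with $0\le s<q$ in $\Omega_N$ corresponds to one of length $k$ in $\Omega_{N^q}$, so the two metrics differ by a factor in $[r^{q-1},1]$. The same device applied to $\Omega_{N'}$ in blocks of length $p$ gives a bi-Lipschitz identification with $(\Omega_{(N')^p},\rho_{(r')^p})$. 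Since $r^q=(r')^p$ and $N^q=(N')^p$ make the two targets literally coincide, composition yields the desired bi-Lipschitz equivalence.

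For necessity, let $f:(\Omega_N,\rho_r)\to(\Omega_{N'},\rho_{r'})$ be a $C$-bi-Lipschitz bijection. Hausdorff dimension is a bi-Lipschitz invariant and $\dim_H(\Omega_N,\rho_r)=\log N/(-\log r)$, so the equality $\log r/\log r'=\log N/\log N'$ is immediate. The remaining statement, that this common value is rational, is the main obstacle. The approach is as follows. Both $\rho_r$ and $\rho_{r'}$ are ultrametrics, hence closed balls coincide with cylinder sets. For each $n$ and each length-$n$ cylinder $[w]\subset\Omega_N$, the clopen image $f([w])$ has diameter in $[C^{-1}r^n,Cr^n]$, and therefore decomposes as a disjoint union of finitely many cylinders of $\Omega_{N'}$ whose lengths lie in a window $[m_n,m_n+O(1)]$ with $m_n\approx n\log r/\log r'$; moreover the number of pieces is bounded independently of $w$ and $n$. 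Recording this decomposition as a combinatorial \emph{type} produces an assignment $[w]\mapsto\tau([w])$ taking values in a finite set. Applying the pigeonhole principle to the $N^n$ cylinders at level $n$ and letting $n$ vary, one finds $n_1<n_2$ and cylinders $[w_1],[w_2]$ of these levels sharing the same type, with the corresponding target levels differing by some positive integer $\Delta m$. Since cylinders in $\Omega_N$ are rescaled isometric copies of $\Omega_N$ (and likewise on the $\Omega_{N'}$ side), conjugating $f$ restricted to these cylinders by the canonical scaling maps yields a bi-Lipschitz self-map whose scale factor relates $r^{n_2-n_1}$ and $(r')^{\Delta m}$. Iterating the same argument inside the produced self-map refines the bound and absorbs the multiplicative error, forcing the exact identity $(n_2-n_1)\log r=\Delta m\cdot\log r'$, and hence $\log r/\log r'\in\Q$.

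The most delicate point, which is the technical heart of \cite{RaoRW12}, is the transition from a repetition of types to an exact algebraic identity between $r$ and $r'$: $f([w])$ is in general a finite union of cylinders rather than a single cylinder, and an iterated renormalization scheme is needed so that the bi-Lipschitz distortion is absorbed entirely by the self-similar rescaling. All remaining steps are either elementary counting or standard uses of the ultrametric ball structure.
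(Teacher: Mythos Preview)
The paper does not prove this theorem at all: it is quoted from \cite{RaoRW12} as a known result and used as a black box in the proof of Theorem~\ref{thm:NN'}. There is therefore no ``paper's own proof'' to compare against.

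As to your proposal on its own merits: the sufficiency via $q$- and $p$-blocking is correct and standard, and the dimension equality in the necessity direction is immediate. Your sketch for rationality follows the Cooper--Pignataro/Falconer--Marsh renormalization strategy, but as written it has a gap. For the conjugation step to make sense, the two cylinders $[w_1],[w_2]$ of matching type must be \emph{nested} (one contained in the other); pigeonhole along a single infinite branch gives this, but you only assert matching types at two different levels, not nesting. Moreover, even with nesting, since each $f([w_i])$ is a finite union of cylinders rather than a single cylinder, ``conjugating by the canonical scaling maps'' does not literally yield a single bi-Lipschitz self-map; one must track which piece maps to which and argue that the same piece recurs, which is exactly the bookkeeping you defer to \cite{RaoRW12}. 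So your necessity argument is an outline with the key technical step delegated, not a self-contained proof --- which, given that the paper itself simply cites the result, is perhaps appropriate.
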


 We show that

\begin{thm}\label{thm:NN'}
  Two symbolic spaces $(\Omega_N, \rho_r)$ and $(\Omega_{N'}, \rho_{r'})$ are strictly H\"older equivalent if and only if
 $$
 \frac{\log N}{\log N'}\in \Q.
 $$
\end{thm}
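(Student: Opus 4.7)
The plan is to reduce strict H\"older equivalence of symbolic spaces to their Lipschitz equivalence, for which we already have the clean characterization in Theorem \ref{thm:RRW}. The key observation is that the family of metrics $\{\rho_r\}_{r\in(0,1)}$ on $\Omega_N$ is closed under taking powers: for any $s>0$,
\begin{equation*}
\rho_r(\bx,\by)^s \;=\; r^{s|\bx\wedge \by|} \;=\; (r^s)^{|\bx\wedge\by|} \;=\; \rho_{r^s}(\bx,\by).
\end{equation*}
Consequently, a bijection $f:\Omega_N\to\Omega_{N'}$ is a bi-H\"older map of index $s$ from $(\Omega_N,\rho_r)$ to $(\Omega_{N'},\rho_{r'})$ if and only if it is a bi-Lipschitz map from $(\Omega_N,\rho_{r^s})$ to $(\Omega_{N'},\rho_{r'})$. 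Thus strict H\"older equivalence with index $s$ between $(\Omega_N,\rho_r)$ and $(\Omega_{N'},\rho_{r'})$ is equivalent to Lipschitz equivalence between $(\Omega_N,\rho_{r^s})$ and $(\Omega_{N'},\rho_{r'})$.

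For the necessity direction, I would start with a bi-H\"older map of index $s$, apply the observation above, and then invoke Theorem \ref{thm:RRW} to conclude
\begin{equation*}
\frac{\log r^s}{\log r'} \;=\; \frac{\log N}{\log N'} \;\in\; \Q,
\end{equation*}
from which $\log N/\log N'\in\Q$ follows immediately.

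For the sufficiency direction, assume $\log N/\log N'\in\Q$ and choose
\begin{equation*}
s \;=\; \frac{\log N\cdot \log r'}{\log N'\cdot \log r}.
\end{equation*}
Since $r,r'\in(0,1)$ and $N,N'\geq 2$, both numerator and denominator are positive, so $s>0$. A direct computation yields
\begin{equation*}
\frac{\log r^s}{\log r'} \;=\; \frac{s\log r}{\log r'} \;=\; \frac{\log N}{\log N'}\in\Q,
\end{equation*}
so Theorem \ref{thm:RRW} gives $(\Omega_N,\rho_{r^s})\sim(\Omega_{N'},\rho_{r'})$. Translating back via the key observation, this produces a strict H\"older equivalence of index $s$ between $(\Omega_N,\rho_r)$ and $(\Omega_{N'},\rho_{r'})$.

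There is no serious obstacle in this argument — the whole proof rests on the stability of the symbolic metric under exponentiation, which converts the H\"older problem into a Lipschitz problem previously settled in \cite{RaoRW12}. The only minor point that needs verification is the positivity of the exponent $s$ used in the sufficiency direction, which is automatic from the signs of $\log r,\log r',\log N,\log N'$.
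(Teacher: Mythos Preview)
Your proof is correct and follows essentially the same approach as the paper: both exploit the identity $\rho_r(\bx,\by)^s=\rho_{r^s}(\bx,\by)$ to convert strict H\"older equivalence into Lipschitz equivalence and then invoke Theorem~\ref{thm:RRW}. The necessity direction is identical; for sufficiency the paper builds a chain through $(\Omega_{N^q},\rho_{r^q})=(\Omega_{(N')^p},\rho_{(r')^p})$ using $N^q=(N')^p$, whereas you more directly select the exponent $s$ so that $(\Omega_N,\rho_{r^s})$ and $(\Omega_{N'},\rho_{r'})$ satisfy the hypothesis of Theorem~\ref{thm:RRW} --- a slightly cleaner route to the same conclusion.
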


\begin{proof} First, we claim that  $(\Omega_N, \rho_r)$ and $(\Omega_{N}, \rho_{r'})$ are strictly H\"older equivalent.
Let $f(x)=x$ be the identity map from $(\Omega_N, \rho_r)$ to $(\Omega_{N}, \rho_{r'})$.
Let $\bx,\by\in \Omega_N$, then
$$
\rho_r(\bx,\by)=r^{|\bx\wedge\by|}, \quad \rho_{r'}(\bx,\by)=(r')^{|\bx\wedge\by|}.
$$
Therefore,
$$
\rho_{r'}(f(\bx),f(\by))=\rho_r(\bx,\by)^s
$$
where $s=\log r'/\log r$. Our claim is proved.

Next, we will complete the proof of the theorem in two steps.

First, suppose $\frac{\log N}{\log N'}=p/q\in \Q$, where $p,q$ are coprime integers, then $N^q=(N')^p$. Therefore,
$$
(\Omega_N, \rho_r)\sim (\Omega_{N^q}, \rho_{r^q})\overset{\text{H\"older}}{\sim}
(\Omega_{N^q}, \rho_{(r')^p})= (\Omega_{(N')^p}, \rho_{(r')^p})\sim (\Omega_{N'}, \rho_{r'}),
$$
that is, $(\Omega_N, \rho_r)$ and $(\Omega_{N'}, \rho_{r'})$ are strictly H\"older equivalent.

Secondly, suppose $(\Omega_N, \rho_r)$ and $(\Omega_{N'}, \rho_{r'})$ are strictly H\"older equivalent. Then
there exists $s, C>0$ such that
$$
C^{-1}  r^{s|\bx\wedge \by|}\leq   {r'}^{|f(\bx)\wedge f(\by)|}\leq C r^{s|\bx\wedge \by|}.
$$
It follows that
$(\Omega_N, \rho_{r^s})$ and $(\Omega_{N'}, \rho_{r'})$ are Lipschitz equivalent through the identity map. So by Theorem \ref{thm:RRW}, we have
$$
\frac{\log N}{\log N'}\in \Q.
$$
This completes the proof.
\end{proof}


The following lemma is  proved by Xi and Xiong \cite{XiXi10}.

\begin{lemma}[\cite{XiXi10}] \label{lem:XiXi} Let $E=K(n, \SD)$ be a totally disconnected fractal cube. Denote $N=\#\SD$. Then
$E\sim (\Omega_N, \rho_{1/n})$.
\end{lemma}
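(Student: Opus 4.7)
The plan is to prove that the natural coding map furnishes a bi-Lipschitz bijection from $(\Omega_N, \rho_{1/n})$ onto $E$. Enumerate $\SD = \{d_0, d_1, \ldots, d_{N-1}\}$, set $\varphi_j(x) = (x + d_j)/n$, and for a word $\bi = i_1 i_2 \cdots i_k$ write $\varphi_{\bi} = \varphi_{i_1}\circ\cdots\circ\varphi_{i_k}$. Define
$$\pi : \Omega_N \to E, \qquad \pi(i_1 i_2 \cdots) = \lim_{k\to\infty}\varphi_{i_1 \cdots i_k}(0),$$
which is a continuous surjection. The upper Lipschitz bound is immediate: if $|\bi \wedge \bj| = k$, both $\pi(\bi)$ and $\pi(\bj)$ lie in the cube $\varphi_{i_1\cdots i_k}([0,1]^d)$ of side $(1/n)^k$, so
$$\|\pi(\bi) - \pi(\bj)\| \leq \sqrt{d}\,(1/n)^k = \sqrt{d}\,\rho_{1/n}(\bi,\bj).$$

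Injectivity of $\pi$ together with the matching lower Lipschitz bound should follow from the key structural claim I would aim to establish: \emph{there exists an integer $L \geq 1$ such that the $N^L$ level-$L$ pieces $\{\varphi_{\bi}(E) : \bi \in \SD^L\}$ are pairwise disjoint.} Granted this claim, set $\delta := \min_{\bi \ne \bj \in \SD^L} \dist(\varphi_{\bi}(E), \varphi_{\bj}(E)) > 0$. For any $\bi \ne \bj \in \Omega_N$ with $|\bi\wedge\bj| = k$, the level-$(k+L)$ pieces containing $\pi(\bi)$ and $\pi(\bj)$ are similar copies (scaled by $(1/n)^k$) of distinct level-$L$ pieces sitting inside the common parent cube $\varphi_{i_1\cdots i_k}([0,1]^d)$, hence separated by at least $\delta\,(1/n)^{k}$. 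This simultaneously yields injectivity of $\pi$ and the lower bound $\|\pi(\bi)-\pi(\bj)\| \geq \delta\,\rho_{1/n}(\bi,\bj)$.

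The principal obstacle is the disjointness claim itself, which is where total disconnectedness of $E$ enters. My approach is as follows. Since $E$ is compact and totally disconnected, for any $\eta > 0$ it admits a finite partition into pairwise disjoint clopen sets $U_1,\ldots,U_m$ of diameter less than $\eta$, and then $\delta_0 := \min_{i\ne j}\dist(U_i,U_j)$ is positive. Choosing $L$ large enough that every $\varphi_{\bi}(E)$ with $|\bi|=L$ has diameter less than $\delta_0$, each such piece is forced into a single $U_k$, so pieces landing in distinct components are automatically $\delta_0$-separated. The subtler step is to rule out overlap of two level-$L$ pieces lying in the same $U_k$; here I would iterate the clopen refinement inside each $U_k$ and exploit the rigidity that distinct level-$L$ subcubes of $[0,1]^d$ can meet only along lower-dimensional faces. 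A combinatorial argument, bounding the refinement depth in terms of the finite digit pattern $\SD$, should produce a single level $L$ at which all $N^L$ pieces are pairwise disjoint. Verifying that this termination actually occurs is the crux of the proof and the main technical difficulty.
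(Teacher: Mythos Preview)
The paper does not prove this lemma itself; it merely cites \cite{XiXi10}. Judged on its own, your proposal has a genuine gap: the structural claim you isolate as ``the crux'' is not merely hard to verify --- it is false. If two level-$1$ cylinders overlap, say $x\in\varphi_i(E)\cap\varphi_j(E)$ with $i\ne j$, then writing $\varphi_i^{-1}(x)\in\varphi_a(E)$ and $\varphi_j^{-1}(x)\in\varphi_b(E)$ gives $x\in\varphi_{ia}(E)\cap\varphi_{jb}(E)$; by induction, distinct level-$L$ cylinders overlap at $x$ for \emph{every} $L$. So your claim is equivalent to the strong separation condition already at level $1$, and that can fail for totally disconnected fractal cubes. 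A concrete counterexample: $d=1$, $n=4$, $\SD=\{0,1,3\}$. Here $\dim_H E=\log 3/\log 4<1$, so $E\subset\R$ has Lebesgue measure zero and is totally disconnected, yet with your labeling $d_0=0,\ d_1=1,\ d_2=3$ the point $1/4$ has the two codings $0\,2\,2\,2\cdots$ and $1\,0\,0\,0\cdots$, so $1/4\in\varphi_0(E)\cap\varphi_1(E)$. The natural coding map $\pi$ is therefore non-injective and cannot serve as the bi-Lipschitz bijection you want.

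The argument in \cite{XiXi10} does not go through $\pi$. Roughly, one uses total disconnectedness to control, at each scale $n^{-k}$, the clopen pieces of $E$ obtained by grouping together touching level-$k$ cylinders; total disconnectedness forces the number of such blocks to be comparable to $N^k$, and a bi-Lipschitz map to $(\Omega_N,\rho_{1/n})$ is built by matching these clopen blocks with symbolic cylinders level by level. The resulting bijection reshuffles addresses so that touching cylinders are sent into a common symbolic branch rather than into different ones --- it is not the coding map, and your clopen-partition idea is closer in spirit to this construction than to the injectivity claim you were trying to force through.
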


Now we are in the position to prove Theorem \ref{thm:cube}.

\begin{proof}[Proof of Theorem \ref{thm:cube}.] Let $E=K(n, \SD)$ and $F=K(n', \SD')$. By Lemma \ref{lem:XiXi},
$E\sim (\Omega_N, \rho_{1/n})$ and $F\sim (\Omega_{N'}, \rho_{1/n'})$. Hence, the theorem follows from
Theorem  \ref{thm:NN'}.
\end{proof}

\section{\textbf{Proof of Theorem \ref{thm:SSC} and Theorem \ref{thm:two-branch}}}
 In this section, we investigate strict H\"older equivalence between
two self-similar sets satisfying the strong separation condition.

Given $N\geq 2$ and $\br=(r_1,\dots, r_N)\in (0,1)^N$, we define
$$\br_{a_1\dots a_k}=r_{a_1}\cdots r_{a_k}$$
for $a_1,\dots, a_k\in \{1,\dots, N\}$.
We define a metric $\rho_{\br}$ on $\Omega_N$ as follows:
\begin{equation}
\rho_{\br}(\bx,\by)=\br_{\bx\wedge \by}, \quad \bx, \by\in \Omega_N.
\end{equation}
The following result is folklore.

\begin{lemma}[\cite{FM92}] \label{lem:one} Let $E\in {\mathcal S}(r_1,\dots, r_N)$, then
$E\sim (\Omega_N, \rho_{\br})$.
\end{lemma}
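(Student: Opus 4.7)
The plan is to exhibit the natural coding map as an explicit bi-Lipschitz bijection. Let the IFS realizing $E$ be $\{\varphi_j\}_{j=1}^N$ with contraction ratios $r_j$, and let $\pi:\Omega_N\to E$ be the coding map
\[
\pi(\bx)=\lim_{k\to\infty}\varphi_{x_1}\circ\varphi_{x_2}\circ\cdots\circ\varphi_{x_k}(z),
\]
for any fixed basepoint $z\in\R^d$; the limit exists since the composition is a contraction and the factor $\br_{x_1\dots x_k}$ tends to $0$. By the self-similar relation $E=\bigcup_j\varphi_j(E)$, one has $\pi(\Omega_N)=E$.

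First I would verify that $\pi$ is a bijection. Surjectivity follows from the standard Hutchinson fixed-point argument. For injectivity, suppose $\bx\neq\by$ and write $\bx\wedge\by=x_1\dots x_k$ (possibly empty) with $x_{k+1}\neq y_{k+1}$. Then $\pi(\bx)\in\varphi_{x_1\dots x_k x_{k+1}}(E)$ while $\pi(\by)\in\varphi_{x_1\dots x_k y_{k+1}}(E)$, and the strong separation condition guarantees that the images $\varphi_{x_{k+1}}(E)$ and $\varphi_{y_{k+1}}(E)$ are disjoint, so these two cylinder sets are disjoint and $\pi(\bx)\neq\pi(\by)$.

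Next, I would establish the two-sided Lipschitz estimate relating $|\pi(\bx)-\pi(\by)|$ to $\rho_{\br}(\bx,\by)=\br_{\bx\wedge\by}$. For the upper bound, $\pi(\bx),\pi(\by)$ both lie in $\varphi_{x_1\dots x_k}(E)$, which has diameter $\br_{x_1\dots x_k}\cdot\diam(E)$, giving
\[
|\pi(\bx)-\pi(\by)|\leq \diam(E)\cdot\br_{\bx\wedge\by}.
\]
For the lower bound, let $\delta=\min_{i\neq j}\dist(\varphi_i(E),\varphi_j(E))>0$, which is positive by the strong separation condition. Since $\pi(\bx)$ and $\pi(\by)$ land in the disjoint sub-cylinders indexed by $x_{k+1}$ and $y_{k+1}$, applying the similitude $\varphi_{x_1\dots x_k}$ (which scales distances by $\br_{x_1\dots x_k}$) yields
\[
|\pi(\bx)-\pi(\by)|\geq \delta\cdot\br_{\bx\wedge\by}.
\]
Taking $C=\max\{\diam(E),\delta^{-1}\}$ gives $C^{-1}\rho_{\br}(\bx,\by)\leq|\pi(\bx)-\pi(\by)|\leq C\rho_{\br}(\bx,\by)$, so $\pi$ is bi-Lipschitz and $E\sim(\Omega_N,\rho_{\br})$.

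There is no real obstacle here; the whole argument is by now classical. The only subtle point is that one must invoke the strong separation condition at two places: once to obtain injectivity of $\pi$ and once to obtain a uniform positive gap $\delta$ between the first-level pieces, which in turn gives the lower Lipschitz bound after rescaling by $\varphi_{\bx\wedge\by}$. Without the SSC either ingredient can fail, so this hypothesis is essential.
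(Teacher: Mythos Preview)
Your argument is correct and is precisely the classical proof of this folklore fact. The paper does not supply its own proof of this lemma; it simply attributes the result to \cite{FM92} and states it without argument, so there is nothing further to compare.
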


The following lemma is obvious, and we omit its proof.

\begin{lemma}\label{lem:two} Let $s>0$. Then
$$
(\Omega_N, \rho_{(r_1,\dots, r_N)})  \overset{\text{H\"older}}{\sim}(\Omega_N, \rho_{(r_1^s,\dots, r_N^s)}).
$$
\end{lemma}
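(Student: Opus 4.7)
The plan is to exhibit an explicit bi-H\"older bijection, namely the identity map $f:(\Omega_N,\rho_{(r_1,\dots,r_N)})\to (\Omega_N,\rho_{(r_1^s,\dots,r_N^s)})$ given by $f(\bx)=\bx$. Since both metric spaces live on the same underlying sequence space $\Omega_N$, there is no need to construct a combinatorial coding, and the only task is to compare the two metrics pointwise.

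The key observation is that the metric $\rho_{\br}$ is defined as a product along the common prefix. If $\bx,\by\in\Omega_N$ and $\bx\wedge\by=a_1\cdots a_k$, then
$$
\rho_{(r_1,\dots,r_N)}(\bx,\by)=r_{a_1}\cdots r_{a_k},
\qquad
\rho_{(r_1^s,\dots,r_N^s)}(\bx,\by)=r_{a_1}^s\cdots r_{a_k}^s.
$$
Because $(r_{a_1}\cdots r_{a_k})^s=r_{a_1}^s\cdots r_{a_k}^s$, we obtain the exact equality
$$
\rho_{(r_1^s,\dots,r_N^s)}\bigl(f(\bx),f(\by)\bigr)=\rho_{(r_1,\dots,r_N)}(\bx,\by)^s,
$$
which verifies \eqref{eq:Holder} with constant $C=1$ and index $s$. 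The case $\bx=\by$ is handled trivially since both sides are $0$.

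There is no real obstacle: this is precisely the identity-map argument already carried out in the first paragraph of the proof of Theorem~\ref{thm:NN'}, extended verbatim from the constant ratio tuple $(r,\dots,r)$ to the general tuple $(r_1,\dots,r_N)$ by using multiplicativity of the power function. I would therefore present the proof in two or three lines, stating the identity above and noting that it yields bi-H\"older equivalence with index $s$.
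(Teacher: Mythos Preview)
Your proof is correct and is precisely the obvious argument the paper has in mind; indeed, the paper omits the proof entirely, calling the lemma ``obvious,'' and your identity-map computation is exactly the intended justification.
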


\begin{proof}[\textbf{Proof of Theorem \ref{thm:SSC}.}]  Let $E\in {\mathcal S}(r_1,\dots, r_m)$ and $F\in {\mathcal S}(t_1,\dots, t_n)$.
Let $s=\dim_H E/\dim_H F$. Let $E'\in {\mathcal S}(r_1^s, \dots, r_m^s)$.

Suppose $E\overset{\text{H\"older}}{\sim} F$. Then by Lemma \ref{lem:one} and Lemma \ref{lem:two}, we have
\begin{equation}\label{eq:bridge}
E'\overset{\text{H\"older}}{\sim} (\Omega_m, \rho_{(r_1^s,\dots, r_m^s)})\overset{\text{H\"older}}{\sim} (\Omega_m, \rho_{(r_1,\dots, r_m)}) \overset{\text{H\"older}}{\sim} E.
\end{equation}
It follows that $E'\overset{\text{H\"older}}{\sim} F$. Finally, since $\dim_H E'=\dim_H F$, we obtain that $E'\sim F$.

Suppose $E'\sim F$. Again by \eqref{eq:bridge}, we have $E\overset{\text{H\"older}}{\sim} F$.
The theorem is proved.
\end{proof}

Rao, Ruan and Wang \cite{RaoRW12} completely characterized when two self-similar sets with two branches and satisfying
the strong separation condition are Lipschitz equivalent.

\begin{theorem}[\cite{RaoRW12}] \label{thm:Lip} Let $E\in {\mathcal S}(r_1, r_2)$ and $F\in {\mathcal S}(t_1,t_2)$.
Assume $r_1\geq r_2$ and $t_1\geq t_2$ without loss of generality.
Then

(i) If $\log r_1/\log r_2\not\in \Q$, then $E {\sim} F$
if and only if
$$
 r_1 = t_1 \text{ \ and \ } r_2=t_2.
$$

(ii) If $\log r_1/\log r_2\in \Q$, then $E {\sim} F$
if and only if there is a real number $0<\lambda<1$ such that
$$
r_1=\lambda^2, r_2=\lambda^3, t_1=\lambda, t_2=\lambda^5,
$$
or the other round $t_1=\lambda^2, t_2=\lambda^3, r_1=\lambda, r_2=\lambda^5$.
\end{theorem}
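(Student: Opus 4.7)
The plan is to reduce the question about self-similar sets to a purely symbolic one and then analyse the combinatorics of cylinder structures. By Lemma \ref{lem:one}, $E\sim (\Omega_2,\rho_{(r_1,r_2)})$ and $F\sim (\Omega_2,\rho_{(t_1,t_2)})$, so it suffices to characterise when these two symbolic spaces are Lipschitz equivalent. Throughout, I will think of diameters of cylinders as elements of the multiplicatively generated sets $\{r_1^a r_2^b\}$ and $\{t_1^c t_2^d\}$, and any bi-Lipschitz bijection on the symbolic space will preserve, up to uniform multiplicative constants, the cardinalities of cylinders of comparable diameter.

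For the sufficiency direction, part (i) is immediate: when the weights coincide, the identity on $\Omega_2$ is an isometry. For the nontrivial direction of (ii), I would construct an explicit bi-Lipschitz recoding between $(\Omega_2,\rho_{(\lambda^2,\lambda^3)})$ and $(\Omega_2,\rho_{(\lambda,\lambda^5)})$. The idea is a substitution-style reorganisation: both systems share the same Hausdorff dimension $s$, with $u=\lambda^s$ satisfying $u^2+u^3=u+u^5$, and this identity encodes a matching between cylinders. Concretely, one regroups cylinders of the $(\lambda^2,\lambda^3)$-system into ``super-cylinders'' whose weights form the same multiset as the cylinders of the $(\lambda,\lambda^5)$-system at corresponding levels, and iterates. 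The resulting graph-directed recoding yields a bijection $\Omega_2\to\Omega_2$ whose distortion is bounded by the ratio of the largest to smallest weight across a single level, hence bi-Lipschitz.

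For the necessity direction in part (i), I would exploit the irrationality of $\log r_1/\log r_2$ via the cylinder-counting invariant. For each scale $\varepsilon>0$, write $N_E(\varepsilon)=\#\{\text{cylinders of diameter in }[\varepsilon,K\varepsilon]\}$, and similarly $N_F(\varepsilon)$. Under a bi-Lipschitz map these counts agree up to a bounded ratio for all $\varepsilon$. Because the diameters $r_1^ar_2^b$ and $t_1^ct_2^d$ each form a multiplicatively ``irrational'' sequence, the asymptotic matching of the two counting functions forces the generating multisets $\{r_1,r_2\}$ and $\{t_1,t_2\}$ to coincide, which is the claim.

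The hard part will be the necessity in (ii). Here one writes $r_1=\lambda^a$, $r_2=\lambda^b$ with $\gcd(a,b)=1$, and likewise $t_1=\mu^c$, $t_2=\mu^d$; the task is to rule out every pairing other than $\{a,b\}=\{2,3\}$, $\{c,d\}=\{1,5\}$ (and vice versa). The cylinder-counting function $N_E(\lambda^{-k})$ becomes a linear recurrence determined by $(a,b)$, and compatibility with $N_F$ under a bi-Lipschitz equivalence imposes algebraic relations between the characteristic polynomials of the two recurrences, together with Perron-type constraints on their dominant roots. Sifting through these Diophantine constraints --- showing that the equation $u^a+u^b=u^c+u^d$ with $u\in(0,1)$ admits essentially only the $(2,3,1,5)$ solution compatible with a genuine symbolic reorganisation --- is the main obstacle, and is carried out in \cite{RaoRW12} through their finite-state ``pseudo-basis'' machinery on the symbolic level.
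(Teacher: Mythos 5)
First, note that the paper offers no proof of Theorem \ref{thm:Lip}: it is imported verbatim from \cite{RaoRW12} and used as a black box in the proof of Theorem \ref{thm:two-branch}. So the relevant question is whether your sketch constitutes an independent proof, and it does not. The decisive gap is the necessity direction of (ii), which you yourself call ``the main obstacle'' and then resolve by appealing to the ``finite-state pseudo-basis machinery'' of \cite{RaoRW12} --- that is, by citing the very theorem you are supposed to be proving. A proof that defers its hardest step to the reference is a citation, not a proof.

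Two further concrete problems. In the necessity direction of (i), the cylinder-counting invariant $N_E(\varepsilon)$ as you define it cannot do the work you ask of it: for any self-similar set of dimension $s$ satisfying the strong separation condition one has $N_E(\varepsilon)\asymp \varepsilon^{-s}$, so the assertion that $N_E(\varepsilon)$ and $N_F(\varepsilon)$ agree up to a bounded ratio holds whenever $\dim_H E=\dim_H F$ and carries no information beyond equality of dimensions; it certainly cannot force $\{r_1,r_2\}=\{t_1,t_2\}$. The genuine necessary conditions are of a different nature (the subgroup and field conditions of Falconer--Marsh \cite{FM92}, sharpened in \cite{RaoRW12} by analyzing which measures of clopen subsets can occur), and no amount of counting cylinders in dyadic-type windows substitutes for that. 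Finally, in the sufficiency direction of (ii), the identity $u^2+u^3=1=u+u^5$ for $u=\lambda^s$ is indeed the correct starting point, but ``regroup cylinders into super-cylinders whose weights form the same multiset'' is precisely the content that must be exhibited and verified --- the explicit graph-directed recoding of \cite{RRX06, RaoRW12} --- and as written it is a description of what a proof would do rather than an argument. Since the paper itself deliberately quotes this theorem rather than reproving it, the honest options are either to cite it as the paper does or to supply the full construction and the measure-theoretic necessity argument; the present sketch does neither.
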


\begin{proof}[\textbf{Proof of Theorem \ref{thm:two-branch}.}]Let $E\in {\mathcal S}(r_1, r_2)$ and $F\in {\mathcal S}(t_1,t_2)$.
Assume $r_1\geq r_2$ and $t_1\geq t_2$ without loss of generality. Let $s=\dim_H E/\dim_H F$.

\textit{Case 1.}  $\log r_1/\log r_2\not\in \Q$.

Suppose   $E\overset{\text{H\"older}}{\sim} F$. Let $E'\in {\mathcal S}(r_1^s, r_2^s)$, then
$E'\sim F$ by Theorem \ref{thm:SSC}. So, by Theorem \ref{thm:Lip}, we have
$$
r_1^s=t_1 \text{ and } r_2^s=t_2,
$$
which imply that $\frac{\log r_1}{\log t_1}=\frac{\log r_2}{\log t_2}=s$.

On the other hand, suppose $\frac{\log r_1}{\log t_1}=\frac{\log r_2}{\log t_2}=\delta$.
Then $r_1=t_1^{\delta}$ and $r_2=t_2^\delta$.
Let $F'\in {\mathcal S}(t_1^\delta,t_2^\delta)$, then
$$E\sim F'\overset{\text{H\"older}}{\sim} F,$$
where the last relation is due to Lemma \ref{lem:one} and Lemma \ref{lem:two}. The theorem is proved in this case.

\textit{Case 2.}  $\log r_1/\log r_2\in \Q$.

Suppose $E\overset{\text{H\"older}}{\sim} F$. Let $E'\in {\mathcal S}(r_1^s, r_2^s)$, then
$E'\sim F$ by Theorem \ref{thm:SSC}. By Theorem \ref{thm:Lip}, we have
\begin{equation}\label{eq:lambda}
r_1^s=\lambda^2, \quad r_2^s=\lambda^3, \quad t_1=\lambda, \quad   t_2=\lambda^5,
\end{equation}
or the other round, it follows that
\begin{equation}\label{eq:2/3}
\frac{\log r_1}{\log r_2}=\frac{2}{3}, \quad \frac{\log t_1}{\log t_2}=\frac{1}{5},
\end{equation}
or the other round $\frac{\log r_1}{\log r_2}=\frac{1}{5},$ $\frac{\log t_1}{\log t_2}=\frac{2}{3}$.

On the other hand, suppose  \eqref{eq:2/3} holds.
Then  \eqref{eq:lambda} holds for some $s>0$.
Let $E'\in {\mathcal S}(r_1^s,r_2^s)$, then
$$F\sim E'\overset{\text{H\"older}}{\sim} E.$$
The theorem is proved.
\end{proof}

\end{document}